\newtheorem{theorem}{Theorem}[section]
\newtheorem{corollary}[theorem]{Corollary}
\theoremstyle{definition}
\newtheorem{example}[theorem]{Example}
\newtheorem{remark}[theorem]{Remark}
\theoremstyle{remark}
\newcommand{\R}{{\mathbb R}}
\begin{document}

\title[On the Liebau phenomenon]{A topological approach to periodic oscillations related to the Liebau phenomenon}

\subjclass[2010]{Primary 34B18, secondary 34B27, 34B60.}%
\keywords{Valveless pumping; Periodic
boundary value problem; Krasnosel'ski\u\i{}-Guo fixed point theorem; Green's function.}%

\author{J. A. Cid}
\address{Jos{\'e} {\'A}ngel Cid, Departamento de Matem\'aticas, Universidade de Vigo, 32004, Pabell\'on 3, Campus de Ourense, Spain}%
\email{angelcid@uvigo.es}%

\author{G. Infante}
\address{Gennaro Infante, Dipartimento di Matematica e Informatica, Universit\`{a} della
Calabria, 87036 Arcavacata di Rende, Cosenza, Italy}%
\email{gennaro.infante@unical.it}%

\author{M. Tvrd\'y}
\address{Milan Tvrd\'y, Mathematical Institute, Academy of Sciences
of the Czech Republic, CZ~115~67 Praha~1, \v{Z}itn\'{a}~25, Czech Republic}%
\email{tvrdy@math.cas.cz}%

\author{M. Zima}
\address{Miros\l awa Zima, Department of Differential Equations and Statistics,
Faculty of Mathematics and Natural Sciences, University of Rzesz\'ow,
Pigonia 1, 35-959 Rzesz{\'o}w, Poland.}%
\email{mzima@ur.edu.pl}%

\begin{abstract}
We give some sufficient conditions for existence, non-existence and localization of positive solutions for a periodic boundary value problem related to the Liebau phenomenon. Our approach is of topological nature and relies on the Krasnosel'ski\u\i{}-Guo theorem on cone expansion and compression. Our results improve and complement earlier ones in the literature.
\end{abstract}

\maketitle

\section{Introduction}

In the 1950's the physician G. Liebau developed some experiments dealing with a valveless pumping phenomenon arising on blood circulation and that has been known for a long time: roughly speaking, Liebau showed experimentally that a periodic compression made on an asymmetric part of a fluid-mechanical model could produce the circulation of the fluid without the necessity of a valve to ensure a preferential direction of the flow \cite{borzipropst,liebau,moser}. After his pioneering work this effect has been known as the Liebau phenomenon.

In \cite{propst} G. Propst, with the aim of contributing to the theoretical understanding of the Liebau phenomenon, presented some differential equations modeling a periodically forced flow through different pipe-tank configurations. He was able to prove the presence of pumping effects in some of them, but the apparently simplest model, the ``one pipe-one tank" configuration, skipped his efforts due to a singularity in the corresponding differential equation model, namely
\begin{equation}
  \label{eqprobsing}
  \left\{
\begin{array}{l}
   u''(t)+a\,u'(t)=\displaystyle \frac 1{u}\left(e(t)-b\,(u'(t))^2\right)-c, \quad   t\in [0,T],  \\
 u(0)=u(T),\quad u'(0)=u'(T),
\end{array}
\right.
\end{equation}
being $a\ge 0,\, b>1,\, c>0 \, \mbox{and} \,
\,e(t) \, \mbox{continuous and $T$-periodic on\ }\R.$

The singular periodic problem \eqref{eqprobsing} was studied in  \cite{CiPrTv}, where the authors gave general results for the existence and asymptotic stability of positive solutions by performing the change of variables $u\,{=}\,x^{\mu}$, where $\mu\,{=}\,\frac 1{b\,{+}\,1},$
which transforms the singular problem \eqref{eqprobsing} into the regular one
\begin{equation}
  \label{eqprobreg}
  \left\{
\begin{array}{l}
  x''(t)+a\,x'(t)=\displaystyle \frac{e(t)}{\mu} \,x^{1\,{-}\,2\,\mu}(t)-\displaystyle\frac{c}{\mu}\,x^{1\,{-}\,\mu}(t), \quad   t\in [0,T],  \\
 x(0)=x(T),\quad x'(0)=x'(T).
\end{array}
\right.
\end{equation}
Then the existence and stability of positive solutions for  \eqref{eqprobreg} were obtained by means of the lower and upper solution technique \cite{deCosHa} and using tricks analogous to those used in \cite{RaTvVr}.

In this paper we deal with the existence of positive solutions for the following generalization of the problem \eqref{eqprobreg}
\begin{equation}
  \label{eqprob}
  \left\{
\begin{array}{l}
    x''(t)+ax'(t)=r(t)x^{\alpha}(t)-s(t)x^{\beta}(t), \quad   t\in [0,T],  \\
  x(0)=x(T), \quad   x'(0)=x'(T),
\end{array}
\right.
\end{equation}
 where we assume
 \begin{itemize}
\item[(H0)] $a\ge 0$, $r,s\in {\mathcal C}[0,T]$, $0<\alpha<\beta<1.$
\end{itemize}
Note that, by defining $r(t)=\displaystyle\frac{e(t)}{\mu},\, s(t)=\displaystyle\frac{c}{\mu},\,
  \alpha=1\,{-}\,2\,\mu$ and  $\beta=1\,{-}\,\mu$, the problem \eqref{eqprobreg} fits within \eqref{eqprob}.

Our approach is essentially of topological nature: in Section \ref{prel} we rewrite problem \eqref{eqprob} as an equivalent fixed point problem suitable to be treated by means of the Krasnosel'ski\u\i{}-Guo cone expansion/compression fixed point  theorem. A careful analysis of the related Green's function, necessary in our approach, is postponed to a final Appendix. In Section \ref{main} we present our main results: existence, non-existence and localization criteria for positive solutions of the problem \eqref{eqprob}. Some corollaries with more ready-to-use results are also addressed. We point out that our results are valid not only for the more general problem \eqref{eqprob}, but also when applied to the singular model problem \eqref{eqprobsing} we improve previous results of \cite{CiPrTv}.

\section{A fixed point formulation}\label{prel}

First of all, by means of a shifting argument, we rewrite the problem \eqref{eqprob} in the equivalent form
\begin{equation}
  \label{eqprob+m}
  \left\{
\begin{array}{l}
     x''(t)+ax'(t)+m^2x(t)=r(t)x^{\alpha}(t)-s(t)x^{\beta}(t)+m^2x(t):=f_m(t,x(t)), \ t\in [0,T],  \\
  x(0)=x(T), \quad   x'(0)=x'(T),
\end{array}
\right.
\end{equation}
with  $m\in \R$. A similar approach has been used, under a variety of boundary conditions in \cite{Han, gippat, Tor, WebbZima}.

We say that problem \eqref{eqprob+m} is non-resonant if zero is the unique solution of the homogeneous linear problem
$$ \left\{
\begin{array}{l}
     x''(t)+ax'(t)+m^2x(t)=0, \quad   t\in [0,T],  \\
  x(0)=x(T), \quad   x'(0)=x'(T).
\end{array}
\right.
$$
In this case the non-homogeneous linear problem
\begin{equation}\label{eqprobhom}
  \left\{
\begin{array}{l}
     x''(t)+ax'(t)+m^2x(t)=h(t), \quad   t\in [0,T],  \\
  x(0)=x(T), \quad   x'(0)=x'(T),
\end{array}
\right.
\end{equation}
is also uniquely solvable and its unique solution is given by
$$
  {\mathcal K}h(t)=\int_0^T G_m(t,s)h(s)ds,
$$
where $G_m(t,s)$ is the related Green's function (see \cite{Cab,CaCiMa}).

In particular, if $m>0$ and $m^2< \left(\displaystyle\frac{\pi}{T}\right)^2+\left(\displaystyle\frac{a}{2}\right)^2$ then the problem \eqref{eqprob+m} is non-resonant and moreover $G_m(t,s)$ satisfies the following properties (see the Appendix for the details):
\begin{itemize}

\item[i)] $G_m(t,s)>0,$ for all $(t,s)\in [0,T]\times [0,T]$.

\item[ii)] $\displaystyle\int_0^T G_m(t,s)ds=\frac{1}{m^2}.$

\item[iii)] There exists a constant $c_m\in (0,1)$ such that $$G_m(t,s)\ge G_m(s,s)\ge c_m G_m(t,s),\ \text{for all}\ (t,s)\in [0,T]\times [0,T].$$
\end{itemize}

A \textit{cone} in a Banach space $X$ is a closed,
convex subset of $X$ such that $\lambda \, x\in K$ for $x \in K$ and
$\lambda\geq 0$ and $K\cap (-K)=\{0\}$. Here we work in the space $X={\mathcal C}[0,T]$ endowed with the usual maximum norm
$\|x\|=\max\{|x(t)|:t\in[0,T]\}$ and use the  cone
$$
P=\{x\in X: x(t)\ge c_m \|x\|\ \text{on}\ [0,T]\},
$$
a type of cone firstly used by Krasnosel'ski\u\i, see for example \cite{krzab}, and D.~Guo, see for example \cite{GuoLak}. The cone $P$ is particularly useful when dealing with singular nonlinearities (see \cite{klamc04}), or for localizing the solutions (see for example \cite{gippmt}).

Let us define the operator $F:P\to X$
\begin{equation}\label{F-def}
Fx(t)=\int_0^T G_m(t,s)f_m(s,x(s))ds.
\end{equation}
Note that a fixed point of $F$ in $P$ is a non-negative solution of the problem \eqref{eqprob+m}. In order to get such a fixed point we employ the following well-known Krasnosel'ski\u\i{}-Guo cone compression/expansion theorem.
\begin{theorem}\label{krasno} \cite{GuoLak}
Let $P$ be a cone in $\,X$ and suppose
that $\,\Omega_1$ and $\Omega_2$ are bounded open sets in $\,X$ such
that $\,0 \in \Omega_1$ and $\ \overline{\Omega}_1\subset\Omega_2$.
Let $F:P\cap(\overline{\Omega}_2 \setminus \Omega_1)\to P$ be a
completely continuous operator such that one of the following
conditions holds:
\begin{itemize}
\item[{(i)}]$\|Fx\|\ge \|x\|\,$ for $x\in P\cap\partial
\Omega_1$ and $\,\|Fx\|\le\|x\|\,$ for $x\in P\cap\partial\Omega_2$,
\item[{(ii)}] $\|Fx\|\le\|x\|\,$ for $x\in P\cap\partial\Omega_1$
and $\,\|Fx\|\ge \|x\|\,$ for $x\in P \cap\partial
\Omega_2.$
\end{itemize}
Then $F$ has a fixed point in the set
$P\cap(\overline{\Omega}_2\setminus \Omega_1)$.
\end{theorem}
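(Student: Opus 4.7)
The plan is to deduce Theorem \ref{krasno} from the standard fixed point index theory on cones. I would first invoke the existence of an integer-valued index $i_P(F,\Omega)$ defined for every completely continuous map $F\colon \overline{\Omega}\cap P\to P$ without fixed points on $\partial\Omega\cap P$, enjoying the usual four properties: normalization ($i_P(\hat{x}_0,\Omega)=1$ whenever the constant map with value $x_0$ lies in $\Omega\cap P$), homotopy invariance along admissible compact homotopies, additivity with respect to disjoint open subsets, and the solution property that $i_P(F,\Omega)\neq 0$ forces the existence of a fixed point of $F$ in $\Omega\cap P$. Such an index can be constructed either via Dugundji's extension theorem combined with the Leray--Schauder degree, or via a retraction of $X$ onto $P$; the details are classical and can be quoted.

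Under case (i), I would compute $i_P(F,\Omega_1)=0$ and $i_P(F,\Omega_2)=1$. For the first, pick any $e\in P\setminus\{0\}$ and deform $F$ through the homotopy $H(\lambda,x)=Fx+\lambda e$ for $\lambda\ge 0$. The norm-expansion hypothesis $\|Fx\|\ge\|x\|$ on $P\cap\partial\Omega_1$, combined with the boundedness of $\Omega_1$, provides an a priori bound on all $\lambda\ge 0$ for which $x=H(\lambda,x)$ admits a solution in $P\cap\partial\Omega_1$; pushing $\lambda$ past that bound and invoking homotopy invariance yields $i_P(F,\Omega_1)=0$. For the second, use the straight-line homotopy $H(t,x)=tFx$, $t\in [0,1]$: any solution $x=tFx$ with $x\in P\cap\partial\Omega_2$ would give $\|x\|=t\|Fx\|\le t\|x\|$, impossible unless $t=1$ and $x$ is already the fixed point sought; otherwise the homotopy is admissible and normalization yields $i_P(F,\Omega_2)=i_P(0,\Omega_2)=1$.

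Additivity then gives
\begin{equation*}
  i_P\bigl(F,\Omega_2\setminus\overline{\Omega}_1\bigr)=i_P(F,\Omega_2)-i_P(F,\Omega_1)=1,
\end{equation*}
and the solution property produces a fixed point of $F$ in $P\cap(\Omega_2\setminus\overline{\Omega}_1)$. Case (ii) is perfectly symmetric: the roles of $\Omega_1$ and $\Omega_2$ are swapped, so that $i_P(F,\Omega_1)=1$ and $i_P(F,\Omega_2)=0$, and the same conclusion is reached.

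The main obstacle I expect is the borderline possibility that $F$ already has a fixed point somewhere on $\partial\Omega_i\cap P$; the index is then not defined, but the conclusion holds trivially, so one may assume at the outset that there is no such boundary fixed point. A second delicate point is the push-out argument on the expansion side: one must verify that the set $\{(\lambda,x):\lambda\ge 0,\ x\in P\cap\partial\Omega_1,\ x=Fx+\lambda e\}$ is bounded in $\lambda$, which follows by taking norms of both sides and combining $e\in P\setminus\{0\}$ with the expansion inequality.
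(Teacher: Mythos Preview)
The paper does not prove this theorem at all: it is quoted verbatim from Guo--Lakshmikantham and used as a tool, so there is no ``paper's own proof'' to compare against. That said, your plan to derive it from the fixed point index on cones is precisely the route taken in that reference, and the compression half (the computation $i_P(F,\Omega_2)=1$ via the straight-line homotopy $tFx$, together with the handling of possible boundary fixed points and the final additivity step) is correct.

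The expansion half, however, has a genuine gap. You claim that the norm condition $\|Fx\|\ge\|x\|$ on $P\cap\partial\Omega_1$ gives an a~priori bound on those $\lambda\ge 0$ for which $x=Fx+\lambda e$ has a solution on $P\cap\partial\Omega_1$, and then ``push $\lambda$ past that bound.'' Two things go wrong. First, the bound you describe follows from the boundedness of $\Omega_1$ and the compactness of $F$ alone (via $\lambda\|e\|=\|x-Fx\|\le\|x\|+\|Fx\|$); the expansion hypothesis is never actually used. Second, and more seriously, homotopy invariance on $[0,\Lambda]$ requires that $x\neq Fx+\lambda e$ on $P\cap\partial\Omega_1$ for \emph{every} $\lambda\in[0,\Lambda]$, not merely that the set of bad $\lambda$'s is bounded. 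The inequality $\|Fx\|\ge\|x\|$ does not rule out such equalities for intermediate $\lambda$: for instance, in $C[0,1]$ with the cone of nonnegative functions one can have $x\equiv 1$, $Fx(t)=t$, $e(t)=1-t$, so that $\|Fx\|=\|x\|$ and $x=Fx+e$. The standard argument instead observes that $\|Fx\|\ge\|x\|$ together with $Fx\neq x$ on $P\cap\partial\Omega_1$ forces $Fx\neq\mu x$ for every $0<\mu\le 1$ there (since $Fx=\mu x$ would give $\|Fx\|=\mu\|x\|\le\|x\|$, hence $\mu=1$), and it is \emph{this} condition, combined with $\inf_{P\cap\partial\Omega_1}\|Fx\|>0$, that yields $i_P(F,\Omega_1)=0$ via the appropriate companion lemma in Guo--Lakshmikantham.
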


In the sequel, for a given continuous function $h{:}\,[0,T]\to\R$ we use the notation
$$
h_*=\min\{h(t):t\in[0,T]\}
  \quad \mbox{and} \quad h^*=\max\{h(t):t\in[0,T]\}.
$$

\section{Main results}\label{main}

Integrating on $[0,T]$ the differential equation in the problem \eqref{eqprob}, and taking into account the boundary conditions, we get a necessary condition for the existence of positive solutions, namely
$$0=\int_0^T [ r(t)x^{\alpha}(t)-s(t)x^{\beta}(t)] dt.$$
So it is easy to arrive to the following non-existence result.
\begin{theorem} If one of the following conditions holds,
\begin{enumerate}
\item $r_*\ge 0$ and $s^*< 0$,
\item $r_*> 0$ and $s^*\le 0$,
\item $r^*\le 0$ and $s_*> 0$,
\item $r^*< 0$ and $s_*\ge 0$,
\end{enumerate}
 then problem \eqref{eqprob} does not have positive solutions.
\end{theorem}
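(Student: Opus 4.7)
The plan is to use the necessary condition derived immediately before the theorem: any positive solution $x$ of \eqref{eqprob} must satisfy
$$\int_0^T \bigl[r(t)\,x^{\alpha}(t) - s(t)\,x^{\beta}(t)\bigr]\,dt = 0.$$
Since $x(t)>0$ on $[0,T]$, we have $x^{\alpha}(t)>0$ and $x^{\beta}(t)>0$ throughout. My strategy is simply to show that under each of the four sign hypotheses, the integrand $r(t)\,x^{\alpha}(t) - s(t)\,x^{\beta}(t)$ has a constant strict sign on $[0,T]$ (either everywhere $>0$ or everywhere $<0$), which forces the integral to be nonzero, a contradiction.

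First I would argue cases (1) and (2) together. Under (1), $r(t)\geq 0$ and $s(t)<0$ on $[0,T]$, so $r(t)x^{\alpha}(t)\geq 0$ while $-s(t)x^{\beta}(t)>0$ everywhere; adding gives a strictly positive integrand. Under (2), $r(t)>0$ and $s(t)\leq 0$ give $r(t)x^{\alpha}(t)>0$ and $-s(t)x^{\beta}(t)\geq 0$, again yielding a strictly positive integrand. Cases (3) and (4) are entirely analogous with reversed signs: in both, $r(t)x^{\alpha}(t)\leq 0$ with one of the two terms strictly negative, so the integrand is strictly negative on all of $[0,T]$.

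In each case the strict pointwise sign together with continuity of $r$, $s$, and $x$ forces
$$\int_0^T \bigl[r(t)\,x^{\alpha}(t) - s(t)\,x^{\beta}(t)\bigr]\,dt \neq 0,$$
contradicting the necessary condition. Hence no positive solution of \eqref{eqprob} can exist under any of the four hypotheses.

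There is essentially no obstacle here: the only delicate point is that one of the two inequalities in each case is strict while the other is only weak, and one must make sure the strict one is strict on the whole interval (which is ensured by using $s^*<0$, $r_*>0$, etc., as pointwise bounds). Once the sign of the integrand is pinned down, the contradiction with the integral identity is immediate.
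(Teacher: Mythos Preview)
Your proposal is correct and follows exactly the approach indicated in the paper: the paper derives the necessary condition $\int_0^T [r(t)x^{\alpha}(t)-s(t)x^{\beta}(t)]\,dt=0$ by integrating the equation and using the periodic boundary conditions, and then simply states that the non-existence result follows easily. Your case analysis showing that the integrand has a strict sign on all of $[0,T]$ is precisely the routine verification the paper omits.
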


Our main result, concerning not only the existence but also the localization of positive solutions, is the following one.
\begin{theorem}\label{thmain} Assume that {\rm (H0)} and the following condition hold:
\begin{itemize}
\item[(H1)] There exist $m>0$ and $0<R_1<R_2$ such that $m^2< \left(\displaystyle\frac{\pi}{T}\right)^2+\left(\displaystyle\frac{a}{2}\right)^2$,
 \begin{equation}\label{eq1R1R2}  f_m(t,x)=r(t)x^{\alpha}-s(t)x^{\beta}+m^2x\ge 0 \quad \mbox{for $t\in[0,T]$ and $x\in [c_m R_1,R_2]$,}\end{equation}
 \begin{equation}\label{eq2R1R2}
f_m(t,x)\ge m^2R_1 \quad \mbox{for $t\in[0,T]$ and $x\in [c_m R_1,R_1]$}
\end{equation}
and
 \begin{equation}\label{eq3R1R2}
f_m(t,x)\le m^2 R_2 \quad \mbox{for $t\in[0,T]$ and $x\in [c_m R_2,R_2]$}.
\end{equation}

\end{itemize}
Then, the problem \eqref{eqprob} has a positive solution $x(t)$ such that
$$c_m R_1\le x(t)\le R_2.$$
\end{theorem}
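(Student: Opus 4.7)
The plan is to apply Theorem \ref{krasno} (case (ii)) to the operator $F$ defined in \eqref{F-def}, with the standard open balls $\Omega_i=\{x\in X:\|x\|<R_i\}$ for $i=1,2$. The hypothesis $m^2<(\pi/T)^2+(a/2)^2$ and $m>0$ from (H1) guarantees non-resonance, so $F$ is well defined, and standard arguments (continuity of $f_m$ on $[0,T]\times[0,\infty)$, continuity of $G_m$, and the Arzelà--Ascoli theorem) give complete continuity of $F$ on $P\cap(\overline{\Omega}_2\setminus\Omega_1)$. Once the two norm inequalities are established on the boundaries $\partial\Omega_1$ and $\partial\Omega_2$, Theorem \ref{krasno} produces a fixed point $x\in P\cap(\overline{\Omega}_2\setminus\Omega_1)$, which is a non-negative solution of \eqref{eqprob+m} and hence of \eqref{eqprob}; the localization $c_mR_1\le x(t)\le R_2$ then follows from $x\in P$ (so $x(t)\ge c_m\|x\|\ge c_mR_1$) together with $\|x\|\le R_2$.

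The first substantive step is to check that $F$ maps $P\cap(\overline{\Omega}_2\setminus\Omega_1)$ into $P$. For any such $x$, we have $R_1\le\|x\|\le R_2$ and $c_m\|x\|\le x(t)\le\|x\|$, hence $c_mR_1\le x(t)\le R_2$, so condition \eqref{eq1R1R2} gives $f_m(s,x(s))\ge0$. Using property iii) of $G_m$ (the diagonal bounds $G_m(s,s)\le G_m(t,s)\le G_m(s,s)/c_m$), one gets
\[
  Fx(t)\ge\int_0^T G_m(s,s)\,f_m(s,x(s))\,ds
  \quad\text{and}\quad
  \|Fx\|\le\frac{1}{c_m}\int_0^T G_m(s,s)\,f_m(s,x(s))\,ds,
\]
so $Fx(t)\ge c_m\|Fx\|$, i.e.\ $Fx\in P$.

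The two norm estimates on the boundaries of $\Omega_1$ and $\Omega_2$ are then straightforward applications of the assumptions combined with property ii) of the Green's function. For $x\in P\cap\partial\Omega_1$ we have $\|x\|=R_1$, so $c_mR_1\le x(t)\le R_1$, and \eqref{eq2R1R2} yields $f_m(s,x(s))\ge m^2R_1$; using $\int_0^T G_m(t,s)\,ds=1/m^2$ gives $Fx(t)\ge R_1=\|x\|$, so $\|Fx\|\ge\|x\|$. Analogously, for $x\in P\cap\partial\Omega_2$ we have $c_mR_2\le x(t)\le R_2$, so \eqref{eq3R1R2} produces $f_m(s,x(s))\le m^2R_2$ and therefore $Fx(t)\le R_2=\|x\|$, giving $\|Fx\|\le\|x\|$.

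There is no genuinely hard step here; the proof is essentially a careful bookkeeping exercise once the right cone $P$ and the right annular region have been chosen. The delicate point is simply to keep the range of $x(t)$ confined to the interval $[c_mR_1,R_2]$ on which the hypotheses on $f_m$ are formulated, and this is exactly what the cone condition $x(t)\ge c_m\|x\|$ is designed to deliver. The analytical content of the argument is therefore fully encoded in the Green's function properties i)--iii) that are assumed here and proved in the Appendix.
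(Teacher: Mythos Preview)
Your proof is correct and follows essentially the same route as the paper: the same open balls $\Omega_i$, the same use of property~iii) to show $F(P\cap(\overline{\Omega}_2\setminus\Omega_1))\subset P$, and the same two boundary estimates via \eqref{eq2R1R2}, \eqref{eq3R1R2} and property~ii). The only slip is a labeling one: the inequalities you actually establish ($\|Fx\|\ge\|x\|$ on $\partial\Omega_1$, $\|Fx\|\le\|x\|$ on $\partial\Omega_2$) are case~(i) of Theorem~\ref{krasno}, not case~(ii) as you state at the outset.
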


\begin{proof} The problem  \eqref{eqprob+m} with the value of $m$ given by (H1) is non-resonant and its Green's function $G_m(t,s)$ satisfies the properties i), ii) and iii) stated in the Introduction. Let us define
$$\Omega_j=\{x\in X: \|x\|<R_j\}\quad j=1,2.$$
Then by \eqref{eq1R1R2} we have $f_m(t,x(t))\ge 0$ for $x \in P\cap(\overline{\Omega}_2 \setminus \Omega_1)$.
Now, for every $x \in P\cap(\overline{\Omega}_2 \setminus \Omega_1)$ and for every $t\in [0,T]$, we have that
$$\int_0^T G_m(s,s) f_m(s,x(s))ds \le Fx(t)= \int_0^T G_m(t,s)f_m(s,x(s))ds\le \int_0^T \frac{G_m(s,s)}{c_m} f_m(s,x(s))ds,$$
and therefore $F(P\cap(\overline{\Omega}_2 \setminus \Omega_1))\subset P$. A standard argument shows that $F$ is a completely continuous operator.

Now, we check that condition (i) in Theorem  \ref{krasno} is fulfilled.

\medbreak
\noindent {\it Claim 1. $\|Fx\|\ge \|x\|$ for $x\in P\cap \partial \Omega_1$.}

Let $x\in P\cap \partial \Omega_1$, that is, $x\in P$ and $\|x\|=R_1$. Then we have that $c_m R_1\le x(t)\le R_1$ for all $t\in [0,T]$ and hence
\begin{eqnarray*}
Fx(t)&=& \int_0^T G_m(t,s)f_m(s,x(s))ds\ge \int_0^T G_m(t,s) m^2R_1ds
 \\ &=& m^2R_1 \int_0^T G_m(t,s) ds=m^2R_1 \frac{1}{m^2}=R_1=\|x\|.
\end{eqnarray*}

 \medbreak
\noindent {\it Claim 2. $\|Fx\|\le \|x\|$ for $x\in P\cap \partial \Omega_2$.}

Let $x\in P\cap \partial \Omega_2$, that is, $x\in P$ and $\|x\|=R_2$. Then $c_m R_2\le x(t)\le R_2$ for all $t\in [0,T]$ and hence
\begin{eqnarray*}
Fx(t)&=& \int_0^T G_m(t,s)f_m(s,x(s))ds\le \int_0^T G_m(t,s) m^2 R_2ds
 \\ &=& m^2 R_2 \int_0^T G_m(t,s) ds=m^2 R_2 \frac{1}{m^2}=R_2=\|x\|.
\end{eqnarray*}

\medbreak

Thefore by Theorem \ref{krasno} the existence of a solution of the problem \eqref{eqprob} with the desired localization property immediately follows.
\end{proof}

The following result is a consequence of Theorem \ref{thmain}.
\begin{theorem}\label{thmain2} Assume that {\rm (H0)} and the following conditions hold:
\begin{itemize}
\item[(H2)] There exists $m>0$ such that $m^2< \left(\displaystyle\frac{\pi}{T}\right)^2+\left(\displaystyle\frac{a}{2}\right)^2$ and
$$f_m(t,x)=r(t)x^{\alpha}-s(t)x^{\beta}+m^2x\ge 0 \quad \mbox{for $t\in[0,T]$ and $x\ge 0$.}$$
\item[(H3)] $r_*>0$ and $s_*>0$.
\end{itemize}

Then the problem \eqref{eqprob} has a positive solution.
\end{theorem}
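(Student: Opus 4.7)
The plan is to deduce Theorem~\ref{thmain2} from Theorem~\ref{thmain} by exhibiting, for the value of $m$ supplied by (H2), suitable constants $0<R_1<R_2$ such that the three hypotheses \eqref{eq1R1R2}, \eqref{eq2R1R2} and \eqref{eq3R1R2} all hold. Inequality \eqref{eq1R1R2} is already built into (H2) and is valid for arbitrary $R_1,R_2>0$, so the task reduces to choosing $R_1$ small and $R_2$ large.

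To verify \eqref{eq2R1R2}, I would bound $f_m$ from below on $x\in[c_m R_1,R_1]$ by the crude estimate $f_m(t,x)\ge r_* x^\alpha - s^* x^\beta + m^2 x$ and then invoke monotonicity of the power functions to obtain $f_m(t,x)\ge r_* c_m^\alpha R_1^\alpha - s^* R_1^\beta + m^2 c_m R_1$. Dividing through by $R_1^\alpha$, the desired inequality takes the form $r_* c_m^\alpha \ge s^* R_1^{\beta-\alpha}+m^2(1-c_m) R_1^{1-\alpha}$. Since $\alpha<\beta<1$, the exponents $\beta-\alpha$ and $1-\alpha$ are both positive, so the right-hand side tends to $0$ as $R_1\to 0^+$, while the left-hand side is a fixed positive constant thanks to $r_*>0$ from (H3). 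Hence \eqref{eq2R1R2} holds for $R_1$ sufficiently small.

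Inequality \eqref{eq3R1R2} is then obtained by the dual manoeuvre: on $x\in[c_m R_2,R_2]$ one has the upper bound $f_m(t,x)\le \max\{r^*,0\}\,R_2^\alpha - s_* c_m^\beta R_2^\beta + m^2 R_2$, so \eqref{eq3R1R2} reduces to $s_* c_m^\beta R_2^{\beta-\alpha}\ge \max\{r^*,0\}$, which holds for every sufficiently large $R_2$ because $\beta-\alpha>0$ and, by (H3), $s_*>0$. Once values of $R_1$ and $R_2$ meeting both asymptotic conditions are fixed (and one can always shrink $R_1$ further to ensure $R_1<R_2$), Theorem~\ref{thmain} produces the positive solution. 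The argument is essentially routine bookkeeping on the subunit powers $x^\alpha$ and $x^\beta$, with no genuine obstacle: (H3) supplies the positivity needed at each endpoint, $r_*>0$ for the small-$R$ lower estimate and $s_*>0$ for the large-$R$ upper estimate, and the ordering $\alpha<\beta<1$ ensures that the two asymptotic regimes pull in the correct directions.
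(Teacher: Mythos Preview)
Your proposal is correct and follows essentially the same route as the paper: both reduce to Theorem~\ref{thmain} by choosing $R_1$ small enough that $r_*c_m^\alpha\ge s^*R_1^{\beta-\alpha}+m^2(1-c_m)R_1^{1-\alpha}$ and $R_2$ large enough that $s_*c_m^\beta R_2^{\beta-\alpha}\ge r^*$, using exactly the same crude bounds on $f_m$. The only cosmetic difference is your use of $\max\{r^*,0\}$, which is unnecessary since (H3) already forces $r^*\ge r_*>0$.
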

\begin{proof} We are going to prove that assumption (H1) in Theorem \ref{thmain} is satisfied. By (H0), (H3) and since $c_m\in (0,1)$ we can choose $0<R_1<R_2$ such that
\begin{equation}\label{eqR1} (1-c_m)m^2 R_1^{1-\alpha}+s^*R_1^{\beta-\alpha}\le r_*c_m^{\alpha}	
\end{equation}
 and
 \begin{equation}\label{eqR2}  r^*\le s_* c_m^{\beta} R_2^{\beta-\alpha}. \end{equation}
Note that the inequality \eqref{eq1R1R2} follows from (H2) so it only remains to check the inequalities \eqref{eq2R1R2} and \eqref{eq3R1R2}.

\medbreak

If $t\in[0,T]$ and $x\in [c_m R_1,R_1]$ then, taking into account (H3) and  \eqref{eqR1}, we have that
$$f_m(t,x)\ge r_*(c_mR_1)^{\alpha}-s^*R_1^{\beta}+m^2c_mR_1\ge m^2R_1,$$
so the inequality \eqref{eq2R1R2} is fulfilled.

 \medbreak

If $t\in[0,T]$ and $x\in [c_m R_2,R_2]$ then, taking into account (H3) and  \eqref{eqR2}, we have that
$$f_m(t,x)\le r^* R_2^{\alpha}-s_*(c_m R_2)^{\beta}+m^2 R_2\le m^2 R_2,$$
so  the inequality \eqref{eq3R1R2} is also fulfilled.
\end{proof}

\begin{remark} Note that (H0) and (H2) imply that $r_*>0$, thus the first part of condition (H3) is redundant but we have included it for the sake of clarity. On the other hand, notice that condition \eqref{eq1R1R2} in Theorem \ref{thmain} could be satisfied even if $r(t)$ assumes negative values.
\end{remark}

Next, we present an explicit condition sufficient in order to get the condition (H2) in Theorem \ref{thmain2}.
\begin{corollary}\label{cor1} Assume that {\rm (H0)} and {\rm (H3)} hold and, moreover, that
\begin{equation}\label{eqexplcond} s^*<\min\{ \left(\displaystyle\frac{\pi}{T}\right)^2+\left(\displaystyle\frac{a}{2}\right)^2, r_*\}.
\end{equation}
Then the problem \eqref{eqprob} has a positive solution.
\end{corollary}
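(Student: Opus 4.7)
My plan is to reduce everything to Theorem \ref{thmain2} by deriving hypothesis (H2) from \eqref{eqexplcond} together with (H3); the corollary then follows directly. The crux lies in an appropriate choice of the auxiliary parameter $m$, after which a short pointwise estimate on $f_m$ finishes the argument.

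The first step is to exploit \eqref{eqexplcond} to select $m>0$ with
\[
s^* \;<\; m^2 \;<\; \min\Bigl\{\bigl(\tfrac{\pi}{T}\bigr)^{2} + \bigl(\tfrac{a}{2}\bigr)^{2},\; r_*\Bigr\}.
\]
The upper bound is precisely the non-resonance restriction appearing in (H2). To verify the nonnegativity of $f_m$ on $[0,T]\times[0,\infty)$, I would use the crude pointwise bounds $r(t)\ge r_*>0$ and $s(t)\le s^*$ (both guaranteed by (H3)) to reduce matters to proving the scalar inequality
\[
g(x) \,:=\, r_*\, x^\alpha - s^*\, x^\beta + m^2\, x \,\ge\, 0 \qquad \text{for every } x\ge 0.
\]
I would split $[0,\infty)$ at $x=1$. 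On $[0,1]$, the inequality $x^\alpha\ge x^\beta$ (valid because $\alpha<\beta$ and $x\le 1$) together with $r_*>s^*$ yields $r_* x^\alpha - s^* x^\beta \ge (r_*-s^*) x^\beta \ge 0$, and adding the nonnegative term $m^2 x$ preserves nonnegativity. On $[1,\infty)$, the inequality $x^\beta\le x$ (valid because $\beta<1$ and $x\ge 1$) together with $m^2>s^*$ yields $m^2 x - s^* x^\beta \ge (m^2-s^*)\, x \ge 0$, and adding the nonnegative term $r_* x^\alpha$ again preserves nonnegativity. Hence (H2) is established.

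With (H2) in hand and (H3) available by assumption, Theorem \ref{thmain2} produces a positive solution of \eqref{eqprob}, which is the conclusion of the corollary. The only part of the argument requiring any real thought is the simultaneous choice of $m^2$ strictly between $s^*$ on the one side and both $r_*$ and $(\pi/T)^2+(a/2)^2$ on the other, and this is precisely what condition \eqref{eqexplcond} is engineered to make possible. The subsequent case analysis is routine, since in each of the two ranges of $x$ one of the positive terms $r_* x^\alpha$ and $m^2 x$ naturally dominates the single negative contribution $-s^* x^\beta$.
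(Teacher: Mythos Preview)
Your proof is correct and follows essentially the same route as the paper: choose $m$ with $s^*<m^2<(\pi/T)^2+(a/2)^2$, reduce to the scalar function $g(x)=r_*x^\alpha-s^*x^\beta+m^2x$, and split at $x=1$ using $x^\alpha\ge x^\beta$ on $[0,1]$ and $x^\beta\le x$ on $[1,\infty)$. The only difference is that you additionally impose $m^2<r_*$, which is harmless but never actually used---the inequality $r_*>s^*$ that you need in the first case comes directly from \eqref{eqexplcond}, not from your choice of $m$.
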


\begin{proof} It is enough to show that condition \eqref{eqexplcond} implies (H2). Indeed, by \eqref{eqexplcond} we can choose $m>0$ such that
$$s^*<m^2<\left(\displaystyle\frac{\pi}{T}\right)^2+\left(\displaystyle\frac{a}{2}\right)^2.$$

Now, we are going to prove that for such $m$ we have that $f_m(t,x)\ge 0$ for all $t\in [0,T]$ and all $x\ge 0$.
\medbreak
\noindent {\it Step 1. We show that $f_m(t,x)\ge 0$ for all $t\in [0,T]$ and all $x\in [0,1]$.}

Since $0<\alpha<\beta<1$ we have $x^{\beta} \le x^{\alpha}$ for $0\le x\le 1$.  Moreover $s^*<r_*$ by \eqref{eqexplcond} and then
\begin{eqnarray*}
f_m (t,x) &=& r(t)x^{\alpha}-s(t)x^{\beta}+m^2 x \ge r_* x^{\alpha}-s^* x^{\beta}+m^2 x \\
 &\ge& r_* x^{\beta}-s^* x^{\beta}+m^2 x=(r_* -s^*) x^{\beta}+m^2 x> 0.
\end{eqnarray*}

\medbreak
\noindent {\it Step 2. We show that $f_m(t,x)\ge 0$ for all $t\in [0,T]$ and all $x\ge 1$.}

Since $0<\beta<1$ we have $x^{\beta} \le x$ for $x\ge 1$ and therefore
\begin{eqnarray*}
f_m (t,x) &=& r(t)x^{\alpha}-s(t)x^{\beta}+m^2 x \ge r_* x^{\alpha}-s^* x^{\beta}+m^2 x \\
 &\ge& r_* x^{\alpha}-s^* x^{\beta}+m^2 x^{\beta}=r_* x^{\alpha}+(m^2-s^*) x^{\beta} \ge 0.
\end{eqnarray*}
\end{proof}

In the particular case of problem \eqref{eqprobreg} we recover \cite[Theorem 1.8]{CiPrTv}.
\begin{corollary}\label{corexist}
Assume that $a\ge 0$, $b>1$, $c>0$ and $e_*\,{>}\,0.$ Then problem \eqref{eqprobreg} has a positive solution provided that
\begin{equation}\label{eqexistprobreg}
\frac{(b\,{+}\,1)\,c^2}{4\,e_*}<\left(\frac{\pi}{T}\right)^2{+}\,\frac{a^2}4\,.
\end{equation}
\end{corollary}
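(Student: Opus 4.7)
The plan is to identify problem \eqref{eqprobreg} with the abstract setting \eqref{eqprob} via $r(t)=e(t)/\mu$, $s(t)\equiv c/\mu$, $\alpha=1-2\mu$ and $\beta=1-\mu$, where $\mu=1/(b+1)$, and then apply Theorem \ref{thmain2}. First I would verify (H0): since $b>1$ one has $0<\alpha=(b-1)/(b+1)<b/(b+1)=\beta<1$ and $r,s$ are continuous. Hypothesis (H3) is immediate, as $e_*>0$ forces $r_*=e_*/\mu>0$ and $s\equiv c/\mu$ is a positive constant.

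The real work lies in verifying (H2) under the sharp assumption \eqref{eqexistprobreg}. Note that applying Corollary \ref{cor1} directly is too weak, because its condition $s^*<r_*$ would translate into $c<e_*$, a genuine loss against \eqref{eqexistprobreg}. Instead, I would pick $m>0$ with
\[
\frac{c^{2}(b+1)}{4\,e_*}\le m^{2}<\Bigl(\frac{\pi}{T}\Bigr)^{2}+\Bigl(\frac{a}{2}\Bigr)^{2},
\]
which is possible precisely by \eqref{eqexistprobreg}, and then show that $f_m(t,x)\ge 0$ for every $t\in[0,T]$ and every $x\ge 0$.

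The key observation is that after the substitution $y=x^{\mu}$ the function $f_m(t,\cdot)$ becomes a quadratic. Indeed, for $x>0$,
\[
f_m(t,x)=x^{1-2\mu}\Bigl[m^{2}(x^{\mu})^{2}-\tfrac{c}{\mu}\,x^{\mu}+\tfrac{e(t)}{\mu}\Bigr],
\]
while $f_m(t,0)=0$. Hence $f_m(t,\cdot)\ge 0$ on $[0,\infty)$ is equivalent to $m^{2}y^{2}-(c/\mu)\,y+e(t)/\mu\ge 0$ for all $y\ge 0$; since the leading coefficient is positive, it suffices that the discriminant be nonpositive, i.e.\ $c^{2}/\mu^{2}\le 4m^{2}e(t)/\mu$. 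Inserting $\mu=1/(b+1)$ and using $e(t)\ge e_*$, this is precisely $m^{2}\ge c^{2}(b+1)/(4\,e_*)$, which holds by our choice of $m$. Therefore (H2) is satisfied and Theorem \ref{thmain2} yields the desired positive solution.

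I do not foresee any serious obstacle; the only mildly clever step is recognising the quadratic-in-$x^{\mu}$ structure of $f_m$, which is exactly what transforms the crude pointwise estimate underlying Corollary \ref{cor1} into the sharp threshold $c^{2}(b+1)/(4\,e_*)$ in \eqref{eqexistprobreg}.
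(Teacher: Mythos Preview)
Your proposal is correct and follows essentially the same route as the paper: identify \eqref{eqprobreg} with \eqref{eqprob}, verify (H0) and (H3) directly, and for (H2) factor $f_m(t,x)$ as $x^{1-2\mu}$ times a quadratic in $x^{\mu}$ whose discriminant is nonpositive precisely under the choice of $m$ permitted by \eqref{eqexistprobreg}. The only cosmetic difference is that the paper first bounds $e(t)\ge e_*$ before factoring (and uses a strict inequality for $m^2$), whereas you carry $e(t)$ through and allow equality; neither affects the argument.
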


\begin{proof} We recall that the problem \eqref{eqprobreg} is of the form \eqref{eqprob} with
$$\mu=\frac{1}{b+1}, \, r(t)=\frac{e(t)}{\mu},\, s(t)=\frac{c}{\mu},\,
  \alpha=1\,{-}\,2\,\mu,\,\, \beta=1\,{-}\,\mu.$$
From our assumptions it follows immediately that the conditions (H0) and (H3) of Theorem \ref{thmain2} hold. Thus it only remains to check the condition (H2). By  \eqref{eqexistprobreg} we can choose $m>0$ such that
\begin{equation}\label{eqexistprobreg2}
\frac{(b\,{+}\,1)\,c^2}{4\,e_*}<m^2<\left(\frac{\pi}{T}\right)^2{+}\,\frac{a^2}4\,.
\end{equation}

Then for all $t\in [0,T]$ and $x\ge 0$ we have
\begin{eqnarray*}
f_m(t,x)&=&  \displaystyle \frac{e(t)}{\mu} \,x^{1\,{-}\,2\,\mu}-\displaystyle\frac{c}{\mu}\,x^{1\,{-}\,\mu}+m^2 x\ge  \displaystyle \frac{e_*}{\mu} \,x^{1\,{-}\,2\,\mu}-\displaystyle\frac{c}{\mu}\,x^{1\,{-}\,\mu}+m^2 x \\
&=& \frac{x^{1\,{-}\,2\,\mu}}{\mu}  \left( e_* -c x^{\mu}+m^2 \, \mu\, x^{2 \mu}Ê\right)\ge 0,
\end{eqnarray*}
because $ e_* -c x^{\mu}+m^2 \, \mu\, x^{2 \mu}>0Ê$ (it is a parabola in the variable $x^{\mu}$ which has by \eqref{eqexistprobreg2} negative discriminant
$c^2 - 4 m^2 \mu e_*<0$, so it has constant sign, and positive independent term $e_*>0$).

\end{proof}

The following example shows that Theorem  \ref{thmain} is in fact more general than Corollary \ref{corexist}. Therefore,
even in the case of the model problem  \eqref{eqprobreg}, Theorem  \ref{thmain} is a true extension of \cite[Theorem 1.8]{CiPrTv}. Moreover, notice that we also obtain information about the localization of the solution which was not provided in \cite{CiPrTv}. Some computations here were made with MAPLE.
\begin{example}\label{ex} Consider the problem \eqref{eqprobreg}, that is,
$$
  \left\{
\begin{array}{l}
  x''(t)+a\,x'(t)=\displaystyle \frac{e(t)}{\mu} \,x^{1\,{-}\,2\,\mu}-\displaystyle\frac{c}{\mu}\,x^{1\,{-}\,\mu}, \quad   t\in [0,T],  \\
 x(0)=x(T),\quad x'(0)=x'(T),
\end{array}
\right.
$$
with the parameter values $a=1.6$, $e(t)\equiv 1.54$, $\mu=0.01$, $c=1.49$ and $T=1$.

Therefore we have
 $$b=99 \, \mbox{(recall that $\mu:=\frac{1}{b+1})$}, \, r(t):=\frac{e(t)}{\mu}\equiv 154, \, s(t):=\frac{c}{\mu}\equiv 149,$$ $$ \alpha:=1-2\mu=0.98 \quad \mbox{and}\quad  \beta:=1-\mu=0.99.$$
Thus, condition \eqref{eqexistprobreg} in Corollary \ref{corexist} is not satisfied because
 $$\frac{(b\,{+}\,1)\,c^2}{4\,e_*}\approx 36.0406>10.5096\approx\left(\frac{\pi}{T}\right)^2{+}\,\frac{a^2}4.$$
 Notice that neither condition \eqref{eqexplcond} in Corollary \ref{cor1} is satisfied since
 $$s^*=149>10.5096\approx\left(\frac{\pi}{T}\right)^2{+}\,\frac{a^2}4.$$

However, the condition (H0) in Theorem  \ref{thmain} is clearly satisfied and if we define $m:=0.7$, $R_1:=27$ and $R_2:=29$ then (H1) is also fulfilled  (notice that $c_m\approx 0.9414$ as it is given by \eqref{c2}).

\vspace*{0.25cm}
\begin{minipage}[b]{0.5\linewidth}

\includegraphics[scale=0.77]{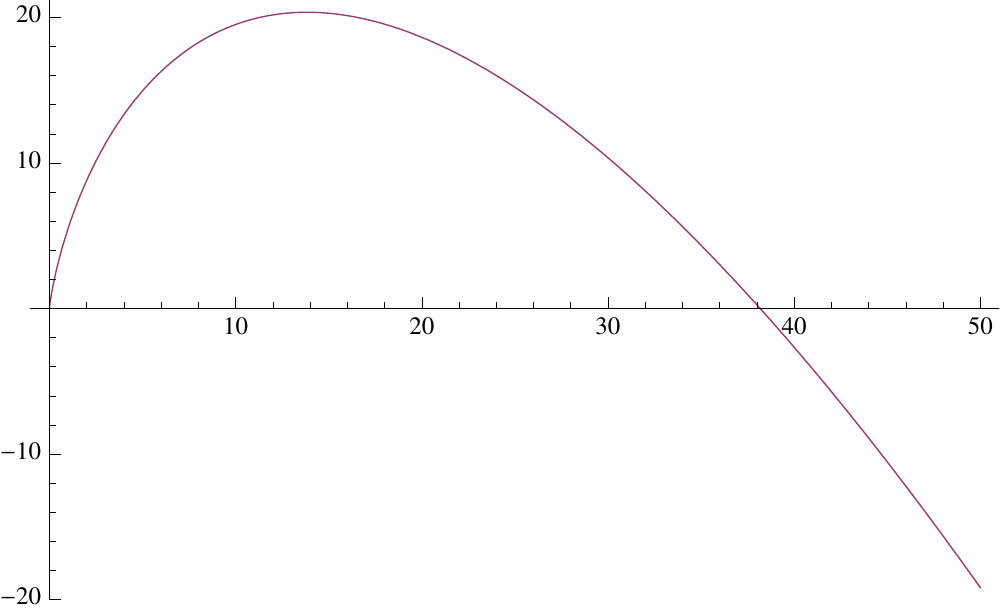}

\begin{center}
$f_m(t,x)$ on $[0,50]$
\end{center}

\end{minipage} \hfill
\begin{minipage}[b]{0.5\linewidth}

\includegraphics[scale=0.77]{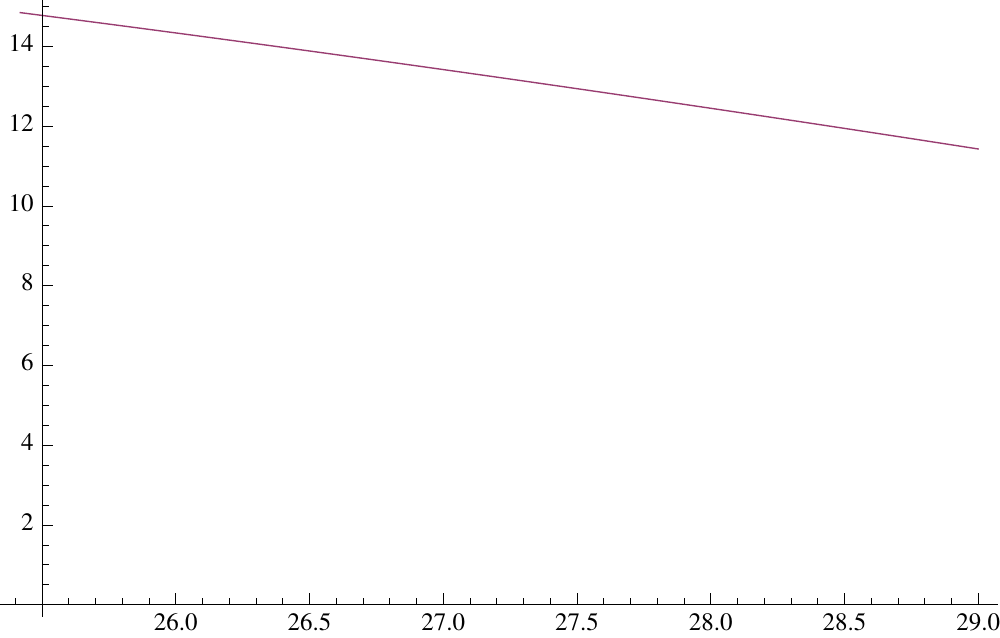}

\begin{center}
$f_m(t,x)$ on $[c_m R_1,R_2]$
\end{center}

\end{minipage}

\vspace*{0.5cm}

\begin{minipage}[b]{0.5\linewidth}

 \includegraphics[scale=0.77]{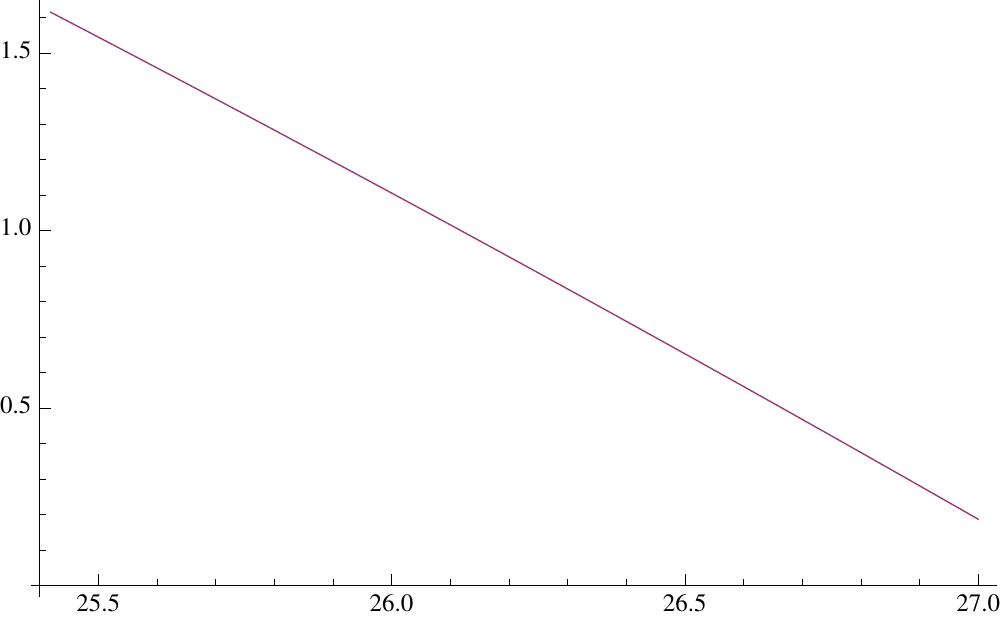}

\begin{center}
$f_m(t,x)-m^2R_1$ on $[c_m R_1,R_1]$
\end{center}

\end{minipage}
\begin{minipage}[b]{0.5\linewidth}

  \includegraphics[scale=0.77]{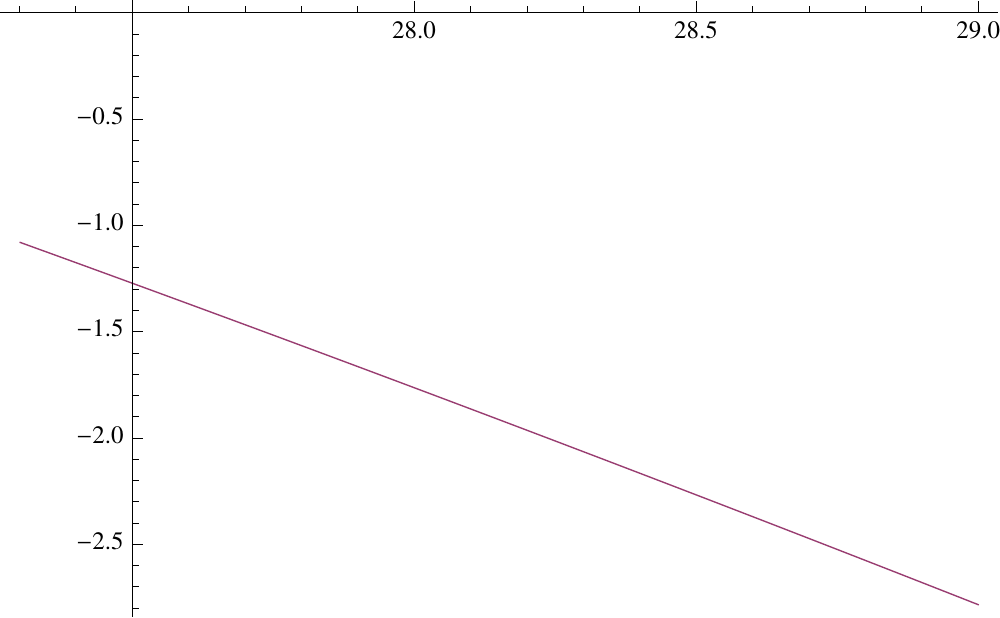}

 \begin{center}
$f_m(t,x)-m^2R_2$ on $[c_m R_2,R_2]$
\end{center}

\end{minipage} \vspace{0.75cm}

Thus, Theorem  \ref{thmain} ensures the existence of a solution $x(t)$ of this problem such that
$$25.4189\approx c_m R_1\le x(t)\le R_2=29.$$
Observe that $f_m$ is not positive for all $x\ge 0$.

\end{example}

\begin{remark} From a careful reading of the proof of \cite[Theorem 1.8]{CiPrTv} it follows that the existence of a positive periodic solution to problem \eqref{eqprobreg}  is ensured if condition \eqref{eqexistprobreg} in Corollary \ref{corexist} is replaced by the more general one:

\begin{itemize}
\item[(H)]  There is  $\delta_0>0$  such that $f(t,x)<0$ for all $t\in [0,T]$ and all  $x\in (0,\delta_0)$ and
      $\lambda^* x - f(t,x) > 0$   for all $t\in [0,T]$ and all $x>0$,   where $\lambda^*=\left(\frac{\pi}{T}\right)^2{+}\,\frac{a^2}4$.
\end{itemize}

However, condition (H) is neither satisfied for the problem presented in Example \ref{ex} because in that case
$$\lambda^* x - f(t,x) \approx 10.5096x-149x^{0.99}+154x^{0.98}, $$
which is a sing-changing nonlinearity on the positive real line. Indeed, dividing by $x^{0.98}$ and making the change $y=x^{0.01}$, it is clear that $\lambda^* x - f(t,x)$ is negative on $(r_1,r_2)$ and positive on $(0,r_1)\cup (r_2,\infty)$, being $r_1\approx 103620.4135$ and $r_2\approx 3.7845*10^{111}$.

\end{remark}

\section{Appendix: The Green's function}

\subsection{The case $0<m<\displaystyle\frac a 2$}

The Green's function related to problem \eqref{eqprobhom} is given by
\begin{equation}\label{GF1}
  G_m(t,s)=\dfrac{1}{\lambda_2-\lambda_1}
  \begin{cases}
  \dfrac{e^{\lambda_2(t-s)}}{1-e^{\lambda_2T}}-\dfrac{e^{\lambda_1(t-s)}}{1-e^{\lambda_1T}},& 0\le s\le t\le T,\\
  \\
  \dfrac{e^{\lambda_2(t-s+T)}}{1-e^{\lambda_2T}}-\dfrac{e^{\lambda_1(t-s+T)}}{1-e^{\lambda_1T}},& 0\le
  t\le s\le T,
  \end{cases}
\end{equation}
where $\lambda_1$ and $\lambda_2$ are the characteristic roots of
the homogeneous equation
$$
x''(t)+ax'(t)+m^2x(t)=0,
$$
that is,
$$
\lambda_1=\dfrac{-a-\sqrt{a^2-4m^2}}{2}, \
\lambda_2=\dfrac{-a+\sqrt{a^2-4m^2}}{2}.
$$
Note that $\lambda_1<\lambda_2<0$.

\begin{remark}
  $G_m(s,s)$ is constant for $s\in [0,T]$,
  $$G_m(s,s)=\dfrac{1}{\lambda_2-\lambda_1}\left(\dfrac{1}{1-e^{\lambda_2T}}-\dfrac{1}{1-e^{\lambda_1T}}\right),$$
  and
  \begin{equation}\label{intGreen}
  \int_0^TG_m(t,s)ds=\frac{1}{\lambda_1\lambda_2}=\frac{1}{m^2}.
  \end{equation}
\end{remark}

The Green's function \eqref{GF1} satisfies
\begin{equation}\label{GF1-p1}
G_m(t,s)\ge G_m(s,s)>0,
\end{equation}
and
\begin{equation}\label{GF1-p2}
G_m(s,s)\ge c_m \,G_m(t,s),
\end{equation}
for all $(t,s)\in [0,T]\times[0,T]$, where
\begin{equation}\label{c1}
c_m=\frac{G_m(s,s)}{\max\{G_m(t,s):t,s\in [0,T]\}},
\end{equation}
that is,
\begin{equation}\label{c2}
  c_m=-\frac{\lambda_2}{\lambda_2-\lambda_1}
  \frac{e^{\lambda_2 T}-e^{\lambda_1 T}}{(1-e^{\lambda_2 T})\left(\frac{(1-e^{\lambda_2T})\lambda_1}{(1-e^{\lambda_1T})\lambda_2}\right)^{\frac{\lambda_1}{\lambda_2-\lambda_1}}}.
\end{equation}

\subsection{The case $m=\displaystyle\frac a 2$}

The Green's function related to the problem \eqref{eqprobhom} is given by
\begin{equation}\label{GF2}
  G_m(t,s)=\dfrac{1}{e^{mT}-1}
  \begin{cases}
  e^{m(t-s)}\left[\dfrac{Te^{mT}}{e^{mT}-1}+s-t\right],& 0\le s\le t\le T,\\
  \\
  e^{m(T-s+t)}\left[\dfrac{T}{e^{mT}-1}+s-t\right],& 0\le
  t\le s\le T.
  \end{cases}
\end{equation}
Then we have
\begin{equation*}
  \int_0^TG_m(t,s)ds=\frac{1}{m^2},
  \end{equation*}
\begin{equation*}
G_m(t,s)\ge G_m(s,s)>0,
\end{equation*}
and
\begin{equation*}
G_m(s,s)\ge c_m \,G_m(t,s),
\end{equation*}
for all $(t,s)\in [0,T]\times[0,T]$, where
\begin{equation*}
c_m=\frac{G_m(s,s)}{\max\{G_m(t,s):t,s\in [0,T]\}},
\end{equation*}
that is,
\begin{equation*}
c_m=\kappa e^{1-\kappa},
\end{equation*}
with
$$
\kappa=\frac{mT}{e^{mT}-1}.
$$

\subsection{ The case $m>\displaystyle\frac a 2$ and $m^2<\left(\dfrac{\pi}{T}\right)^2+\left(\dfrac{a}{2}\right)^2$}

We use the following notation:
$$\gamma=-\dfrac{a}{2}, \ \delta=\dfrac{\sqrt{4m^2-a^2}}{2}, \ D=1-2e^{\gamma T}\cos(\delta T)+e^{2\gamma T}.$$
The Green's function related to the problem \eqref{eqprobhom} is given by
\begin{equation}\label{GF3}
  G_m(t,s)=\dfrac{1}{\delta D}
  \begin{cases}
  e^{\gamma(T+t-s)}\sin(\delta(T+s-t))+e^{\gamma(t-s)}\sin(\delta(t-s)),& 0\le s\le t\le T,\\
  \\
  e^{\gamma(T+t-s)}\sin(\delta(T+t-s))+e^{\gamma(2T+t-s)}\sin(\delta(s-t)),& 0\le
  t\le s\le T.
  \end{cases}
\end{equation}
Then we have
\begin{equation*}
  \int_0^T G_m(t,s)ds=\frac{1}{m^2},
  \end{equation*}
\begin{equation*}
G_m(t,s)\ge G_m(s,s)>0,
\end{equation*}
and
\begin{equation*}
G_m(s,s)\ge c_m \,G_m(t,s),
\end{equation*}
for all $(t,s)\in [0,T]\times[0,T]$, where
\begin{equation*}
c_m=\frac{G_m(s,s)}{\max\{G_m(t,s):t,s\in [0,T]\}}=\frac{\dfrac{1}{\delta D}e^{\gamma T}\sin(\delta T)}{\max\{G_m(t,s):t,s\in [0,T]\}}.
\end{equation*}

If $a=0$ then $\gamma=0,\, \delta=m,\, D=2(1-\cos m T ) $ and it is known that $\max\{G_m(t,s):t,s\in [0,T]\}=\frac{1}{2m\sin\frac{mT}{2}}$, see \cite{Tor}. Therefore we have $c_m=\cos\frac{mT}{2}$.

On the other hand, if $a>0$ it is difficult to find exactly the maximum of $G_m(t,s)$, but we are able to get an upper bound in the following way: since $0<\delta T<\pi$ we have for $0\le s\le t\le T$ (the case $0\le t\le s\le T$ is analogous)
$$
\aligned
e^{\gamma(T+t-s)}\sin(\delta(T+s-t))&+e^{\gamma(t-s)}\sin(\delta(t-s))\le \sin(\delta(T+s-t))+\sin(\delta(t-s))\\
&=2\sin\frac{\delta T}{2}\cos\left(\delta(\frac{T}{2}-(t-s))\right)\le 2\sin\frac{\delta T}{2}.
\endaligned
$$
So, $\max\{G_m(t,s):t,s\in [0,T]\}\le \frac{2}{\delta D}\sin\frac{\delta T}{2}$ which yields $c_m\ge e^{\gamma T}\cos\frac{\delta T}{2}$.

\begin{remark}
  If $m^2=\left(\dfrac{\pi}{T}\right)^2+\left(\dfrac{a}{2}\right)^2$ then $G_m(s,s)=0$.
\end{remark}
\section*{Acknowledgements}
J. A. Cid was partially supported by Ministerio de Educaci\'on y Ciencia, Spain, and FEDER, Project MTM2010-15314, G. Infante was partially supported by G.N.A.M.P.A. - INdAM (Italy), M. Tvrd\'y was supported by GA \v{C}R Grant P201/14-06958S and  RVO: 67985840 and M. Zima was partially supported by the Centre for Innovation and Transfer of Natural Science and Engineering Knowledge of University of Rzesz\'ow.

\end{document}